\newtheorem{theorem}{Theorem}
\newtheorem{cor}{Corollary}
\newtheorem{lemma}{Lemma}
\newtheorem{exmp}{Example}
\newtheorem{rem}{Remark}
\begin{document}
\author{Mark Pankov}
\title{Automorphisms of infinite-dimensional hypercube graph}
\subjclass[2010]{05C63, 20B27}
\address{Department of Mathematics and Informatics, University of Warmia and Mazury,
{\. Z}olnierska 14A, 10-561 Olsztyn, Poland}
\email{pankov@matman.uwm.edu.pl markpankov@gmail.com}

\maketitle
\begin{abstract}
We consider the infinite-dimensional hypercube graph.
This graph is not connected and has isomorphic connected components. 
We desribe the restrictions of its automorphisms
to the connected components and the automorphism group of connected component.
\end{abstract}

\section{Introduction}
By \cite{ER}, typical graphs have no non-trivial automorphisms.
On the other hand, the classical Frucht  result \cite{Frucht} states that every abstract group can be realized as
the automorphism group of some graph (we refer \cite{Cam} for more information concerning graph automorphisms).
In particular, the Coxeter group of type ${\textsf B}_{n}={\textsf C}_{n}$ (the wreath product $S_{2}\wr S_{n}$) is isomorphic
to the automorphism group of the $n$-dimensional hypercube graph $H_{n}$.

In this note we consider the infinite-dimensional hypercube graph $H_{{\aleph}_{0}}$.
This graph is not connected and has isomorphic connected components.
We describe the restrictions of its automorphisms
to the connected components (Theorem 1). As a simple consequence, we establish that
the automorphism group of connected component is isomorphic to
the so-called weak wreath product of $S_{2}$ and $S_{\aleph_{0}}$ (Corollary 1).

\section{Infinite-dimensional hypercube graph}

A subset $X\subset {\mathbb Z}\setminus\{0\}$ is said to be {\it singular} if
$$i\in X\;\Longrightarrow\;-i\not\in X.$$
For every natural $i$ each maximal singular subset contains precisely one of the numbers $i$ or $-i$;
in other words, if $X$ is a maximal singular subset then the same holds for its complement in ${\mathbb Z}\setminus\{0\}$.
Two maximal singular subsets $X,Y$ are called {\it adjacent} if
$$|X\setminus Y|=|Y\setminus X|=1.$$
In this case, we have
$$X=(X\cap Y)\cup\{i\}\;\mbox{ and }\;Y=(X\cap Y)\cup\{-i\}$$
for some number $i\in {\mathbb Z}\setminus\{0\}$.

Following Example 2.6 in \cite{Pankov-book}, we say that a permutation $s$ on the set ${\mathbb Z}\setminus \{0\}$
is {\it symplectic} if
$$s(-i)=-s(i).$$
A permutation is symplectic if and only if it preserves
the family of singular subsets.
The group of symplectic permutations is isomorphic to the wreath product $S_{2}\wr S_{\aleph_{0}}$
(we write $S_{\alpha}$ for the group of permutations on a set of cardinality $\alpha$, 
see Section 5 for the definition of wreath product).
The action of this group on the family of maximal singular subsets is transitive.

Denote by $H_{{\aleph}_{0}}$ the graph whose vertex set is formed by all maximal singular subsets and whose edges are
adjacent pairs of such subsets.
This graph is not connected. The connected component containing $X\in H_{{\aleph}_{0}}$ will be denoted by $H(X)$;
it consists of all $Y\in H_{{\aleph}_{0}}$ such that
$$|X\setminus Y|=|Y\setminus X|<\infty.$$
Any two connected components $H(X)$ and $H(Y)$ are isomorphic.
Indeed, every symplectic permutation $s$ on the set ${\mathbb Z}\setminus \{0\}$
induces an automorphism of $H_{{\aleph}_{0}}$; this automorphism transfers $H(X)$ to $H(Y)$
if $s(X)=Y$.

\begin{rem}{\rm
It is clear that $H_{{\aleph}_{0}}$ can be identified with the graph whose vertices are sequences
$$\{a_{n}\}_{n\in {\mathbb N}}\;\mbox{ with }\;a_{n}\in \{0,1\}$$
and $\{a_{n}\}_{n\in {\mathbb N}}$ is adjacent with $\{b_{n}\}_{n\in {\mathbb N}}$ (connected by an edge) if
$$\sum_{n\in {\mathbb N}}|a_{n}-b_{n}|=1.$$
Then one of the connected components is formed by all sequences having a finite number of non-zero elements.
The graph $H_{{\aleph}_{0}}$ also can be defined as the Cartesian product of infinitely many
factors $K_2$ \cite{ImRall}.
}\end{rem}

\section{Automorphisms}

Every automorphism of $H_{{\aleph}_{0}}$ induced by a  symplectic permutation will be called {\it regular}.
An easy verification  shows that distinct symplectic permutations induce distinct regular automorphisms.
Therefore, the group of regular automorphisms is isomorphic to $S_{2}\wr S_{\aleph_{0}}$.

Non-regular automorphisms exist.
The following example is a modification of examples given in \cite{BH,Pankov1}, see also Example 3.14 in \cite{Pankov-book}.

\begin{exmp}{\rm
Let $A\in H_{{\aleph}_{0}}$ and $B$ be a vertex of the connected component $H(A)$ distinct from $A$.
We take any symplectic permutation $s$ transferring  $A$ to $B$.
This permutation preserves $H(A)$ and
the mapping
$$f(X):=
\begin{cases}
s(X)&X\in H(A)\\
\;X&X\in H_{{\aleph}_{0}}\setminus H(A)
\end{cases}$$
is well-defined.
Clearly, $f$ is a non-trivial automorphism of $H_{{\aleph}_{0}}$.
Suppose that this automorphism is regular and $t$ is the associated symplectic permutation.
For every $i\in {\mathbb Z}\setminus \{0\}$ there exists a singular subset $N$ such that
$$X=N\cup\{i\}\;\mbox{ and }\;Y=N\cup\{-i\}$$
are elements of $H_{{\aleph}_{0}}\setminus H(A)$.
Then
$$t(N)=t(X\cap Y)=t(X)\cap t(Y)=f(X)\cap f(Y)=X\cap Y=N$$
and
$$N\cup\{i\}=X=f(X)=t(X)=t(N)\cup\{t(i)\}=N\cup\{t(i)\}$$
which implies that $t(i)=i$.
Thus $t$ is identity which is impossible.
So, the automorphism $f$ is non-regular.
}\end{exmp}

\begin{theorem}
The restriction of every automorphism of $H_{{\aleph}_{0}}$ to any connected component
coincides with the restriction of some regular automorphism to this connected component.
\end{theorem}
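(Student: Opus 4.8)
The plan is to recover, from an arbitrary automorphism $\varphi$ of $H_{{\aleph}_{0}}$ and a fixed connected component $H(X)$, a symplectic permutation $s$ of ${\mathbb Z}\setminus\{0\}$ whose induced regular automorphism agrees with $\varphi$ on all of $H(X)$. Since $\varphi$ sends $H(X)$ onto the component $H(X')$, where $X':=\varphi(X)$, such an $s$ must satisfy $s(X)=X'$; beyond that it must reproduce the way in which $\varphi$ permutes the vertices of $H(X)$.

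The first and main step is to show that $\varphi$ respects edge directions. Each edge $\{Y,Z\}$ of $H_{{\aleph}_{0}}$ has a well-defined \emph{direction} $n\in{\mathbb N}$, namely the $n$ with $(Y\setminus Z)\cup(Z\setminus Y)=\{n,-n\}$, and every vertex has exactly one incident edge of each direction. I would first define a bijection $\sigma$ of ${\mathbb N}$ from the action of $\varphi$ on the neighbours of $X$: these neighbours are labelled by the directions of the edges joining them to $X$, the same holds at $X'$, and $\varphi$ carries one labelled set bijectively onto the other, which yields $\sigma$. Then I would prove, by induction on the distance from $X$ inside $H(X)$, that $\varphi$ maps every edge of $H(X)$ of direction $n$ onto an edge of direction $\sigma(n)$. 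The inductive step rests on two elementary facts about hypercubes: any two vertices at distance two lie on a common $4$-cycle, and the two opposite edges of a $4$-cycle have equal direction; since $\varphi$ takes $4$-cycles to $4$-cycles, the direction labelling propagates outward from $X$ in a consistent fashion.

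Granting direction-preservation, Step 2 is to construct $s$. For each $n$ let $x_n$ be the unique element of $X\cap\{n,-n\}$ and $x'_m$ the unique element of $X'\cap\{m,-m\}$, and let $s$ carry the pair $\{n,-n\}$ onto $\{\sigma(n),-\sigma(n)\}$ by $s(x_n):=x'_{\sigma(n)}$ and $s(-x_n):=-x'_{\sigma(n)}$. This $s$ is symplectic by construction, is a permutation of ${\mathbb Z}\setminus\{0\}$ because $\sigma$ is a bijection, and satisfies $s(X)=X'$; let $f$ be the regular automorphism it induces. Step 3 is to check that $f(Y)=\varphi(Y)$ for every $Y\in H(X)$ by induction on the distance $d(X,Y)$ inside $H(X)$, the case $d(X,Y)=0$ being $f(X)=X'=\varphi(X)$. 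For the inductive step choose a neighbour $Y_0$ of $Y$ with $d(X,Y_0)=d(X,Y)-1$; if the edge $\{Y_0,Y\}$ has direction $n$, then $\{f(Y_0),f(Y)\}$, being the $s$-image of that edge, has direction $\sigma(n)$ because $|s(\pm n)|=\sigma(n)$, whereas $\{\varphi(Y_0),\varphi(Y)\}$ has direction $\sigma(n)$ by Step 1; since $f(Y_0)=\varphi(Y_0)$ by the inductive hypothesis and a vertex has only one incident edge of a given direction, $f(Y)=\varphi(Y)$ follows.

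The real difficulty is concentrated in Step 1: one must show that a purely abstract graph automorphism is forced to preserve the edge-direction structure over an entire component, and the $4$-cycle propagation argument — together with connectedness of $H(X)$ and finiteness of all distances within it — is precisely what makes this work. Steps 2 and 3 are then routine bookkeeping.
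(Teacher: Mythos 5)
Your proof is correct, and it takes a genuinely different route from the paper's, although the two arguments run on the same combinatorial engine. The paper never mentions edge directions or $4$-cycles explicitly: for each vertex $Y$ of the component it constructs a \emph{local} symplectic permutation $s_{Y}$ agreeing with the automorphism on the closed neighbourhood $Y^{\sim}$ (its Lemma 1), proves $s_{Y}=s_{Z}$ for adjacent $Y,Z$ (its Lemma 2), and concludes by connectedness; the repeated normalization ``compose with a regular automorphism so that $f$ fixes the relevant vertices'' does the bookkeeping that your explicit direction bijection $\sigma$ does. Your Step 1 is exactly what is hidden inside the paper's Lemma 2: the four vertices $X,X',Y',Y$ occurring there form a $4$-cycle, and the adjacency of $f(X')$ with $f(Y')$ forcing $s_{X}(j)=s_{Y}(j)$ is precisely the statement that opposite edges of the image $4$-cycle carry the same direction. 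What your version buys is conceptual clarity: it cleanly separates the permutation of coordinates $\sigma$ (the $S_{\aleph_{0}}$ part) from the sign data (the $S_{2}$ parts), and it is the specialization to a Cartesian power of $K_{2}$ of the standard layer/colour argument for automorphisms of Cartesian products of graphs, which the paper's final remark cites as an alternative source for Corollary 1. Two points worth making explicit when you write it up: the induction hypothesis in Step 1 should be that every edge incident to a vertex at distance at most $d$ from $X$ has its direction transformed by $\sigma$ (the step for an edge $\{Y,Z\}$ at a vertex $Y$ at distance $d+1$ uses the hypothesis at a predecessor $Y_{0}$ for both the edge $\{Y_{0},Y\}$ and the parallel edge $\{Y_{0},W\}$ of the $4$-cycle through $Y_{0},Y,Z$), and one should note that every $4$-cycle of $H_{\aleph_{0}}$ is of the grid type --- two vertices at distance two together with their two common neighbours --- which follows since the graph is triangle-free, so that the assertion ``$\varphi$ takes $4$-cycles to $4$-cycles'' really does transport the opposite-edges-same-direction property.
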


\begin{rem}{\rm
A similar result was obtained in \cite{Pankov1} for the infinite Johnson graph. The proof
of that result  is based on the same idea, but technically is more complicated.
}\end{rem}

\section{Proof of Theorem 1}
Let $A\in H_{{\aleph}_{0}}$ and $f$ be the restriction of an automorphism of $H_{{\aleph}_{0}}$ to
the connected component $H(A)$.
For every $X\in H_{{\aleph}_{0}}$ we denote by $X^{\sim}$ the set which contains $X$ and all vertices of $H_{{\aleph}_{0}}$
adjacent with $X$.
It is clear that $X^{\sim}$ is contained in $H(A)$ if $X\in H(A)$.

\begin{lemma}
For every $X\in H(A)$ there is a symplectic permutation $s_{X}$
such that
\begin{equation}\label{eq1}
f(Y)=s_{X}(Y)\;\;\;\;\;\;\forall\;Y\in X^{\sim}.
\end{equation}
\end{lemma}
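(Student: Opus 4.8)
The plan is to use the rich local structure of the hypercube around a single vertex $X$. The neighborhood $X^{\sim}$ is the "star" at $X$: it contains $X$ together with the countably many vertices $X_i:=(X\setminus\{i\})\cup\{-i\}$ obtained by flipping one coordinate. First I would show that $f$ sends $X^{\sim}$ into the star at $f(X)$, i.e. $f(X)^{\sim}$, since $f$ is a graph isomorphism and preserves adjacency. So $f$ induces a bijection from the set of "flip directions" at $X$ to the set of flip directions at $f(X)$. Concretely, writing $X$ as a maximal singular subset, for each $i\in X$ there is a unique $j\in f(X)$ with $f(X_i)=(f(X)\setminus\{j\})\cup\{-j\}$; this defines a bijection $\sigma$ from $X$ onto $f(X)$. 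Extending $\sigma$ to a symplectic permutation $s_X$ of ${\mathbb Z}\setminus\{0\}$ by declaring $s_X(-i):=-\sigma(i)$ for $i\in X$ is then automatic, since $X$ meets each pair $\{i,-i\}$ exactly once, so $X$ and $-X$ partition ${\mathbb Z}\setminus\{0\}$ and the assignment is a well-defined bijection respecting the sign involution.

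Next I would check that this $s_X$ does the job on all of $X^{\sim}$. By construction $s_X(X)=\sigma(X)=f(X)$, and for each neighbor $X_i$ we have $s_X(X_i)=s_X\bigl((X\setminus\{i\})\cup\{-i\}\bigr)=(f(X)\setminus\{\sigma(i)\})\cup\{-\sigma(i)\}$, which is exactly $f(X_i)$ by the definition of $\sigma$. That covers every element of $X^{\sim}$, giving \eqref{eq1}. The only genuinely delicate point is verifying that the correspondence $i\mapsto j$ really is a well-defined bijection $X\to f(X)$: one must know that $f$ maps the neighbors of $X$ bijectively onto the neighbors of $f(X)$ (immediate from $f$ being an isomorphism of the component, since the neighbors of a vertex are an intrinsic graph-theoretic notion), and that each neighbor of $f(X)$ has the form $(f(X)\setminus\{j\})\cup\{-j\}$ for a unique $j\in f(X)$ — this is the structural description of adjacency recalled in Section 2.

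The main obstacle, such as it is, is mostly bookkeeping: one has to be careful that "$j\in f(X)$" is the right indexing (as opposed to $-j$), and that the symplectic extension is consistent. There is no real global obstruction here because we only need agreement on the single star $X^{\sim}$, not on the whole component — the genuinely hard part of Theorem 1 will be gluing these local permutations $s_X$ together consistently as $X$ varies over $H(A)$, but that is the job of later lemmas, not of this one. So I expect the proof of Lemma 1 to be short: define $\sigma:X\to f(X)$ from the action of $f$ on neighbors, extend it to a symplectic $s_X$ using that $X\sqcup(-X)={\mathbb Z}\setminus\{0\}$, and verify $f=s_X$ on $X^{\sim}$ by direct computation on $X$ and on each $X_i$.
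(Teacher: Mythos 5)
Your proposal is correct and follows essentially the same route as the paper: read off a bijection between the ``flip directions'' at $X$ and at $f(X)$ from the action of $f$ on the star $X^{\sim}$, extend it to a symplectic permutation using $X\sqcup(-X)={\mathbb Z}\setminus\{0\}$, and verify agreement on each neighbor by direct computation. The only cosmetic difference is that the paper first normalizes to $f(X)=X$ by composing with a symplectic permutation, whereas you work with $f(X)$ directly; both versions are fine.
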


\begin{proof}
We can assume that $f(X)$ coincides with $X$
(if $f(X)\ne X$ then we take any symplectic permutation $t$ sending $f(X)$ to $X$ and consider $tf$).
In this case, the restriction of $f$ to $X^{\sim}$ is a bijective transformation of $X^{\sim}$.

For every $i\in {\mathbb Z}\setminus \{0\}$ one of the following possibilities is realized:
\begin{enumerate}
\item[$\bullet$] $i\not\in X$,
\item[$\bullet$] $i\in X$.
\end{enumerate}
Consider the first case. Then $-i\in X$ and there is unique element of $X^{\sim}$ containing $i$, this is
\begin{equation}\label{eq2}
Y=\{i\}\cup(X\setminus \{-i\}).
\end{equation}
Since $f|_{X^{\sim}}$ is a transformation of $X^{\sim}$,
$f(Y)$ is adjacent with $X$ and the set $f(Y)\setminus X$ contains only one element. We denote it by $s_{X}(i)$.
It is clear that $s_{X}(i)\not\in X$.

In the second case,  $-i\not\in X$ and we define $s_{X}(i)$ as $-s_{X}(-i)$.
Since $s_{X}(-i)$ does not belong to $X$, we have $s_{X}(i)\in X$.

So, $s_{X}$ is a symplectic permutation on ${\mathbb Z}\setminus \{0\}$ such that
$$s_{X}(X)=X.$$
Now, we check \eqref{eq1}.

Let $Y\in X^{\sim}$. Then we have \eqref{eq2}
for some $i$ and
$$s_{X}(Y)=\{s_{X}(i)\}\cup (s_{X}(X)\setminus \{-s_{X}(i)\})=\{s_{X}(i)\}\cup (X\setminus \{-s_{X}(i)\})$$
is unique element of $X^{\sim}$ containing $s_{X}(i)$. On the other hand,
$s_{X}(i)$ belongs to $f(Y)$ by the definition of $s_{X}$.
Therefore, $f(Y)$ coincides with $s_{X}(Y)$.
\end{proof}

\begin{lemma}
If $X,Y\in H(A)$ are adjacent then $s_{X}=s_{Y}$.
\end{lemma}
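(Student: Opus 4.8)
The strategy is to show that the two symplectic permutations $s_X$ and $s_Y$ agree on all of $\mathbb{Z}\setminus\{0\}$ by exploiting the overlap $X^{\sim}\cap Y^{\sim}$. Since $X$ and $Y$ are adjacent, they share almost everything: $X=(X\cap Y)\cup\{i\}$ and $Y=(X\cap Y)\cup\{-i\}$ for some $i$. By Lemma 1 we have $f(Z)=s_X(Z)$ for all $Z\in X^{\sim}$ and $f(Z)=s_Y(Z)$ for all $Z\in Y^{\sim}$, so $s_X$ and $s_Y$ induce the same transformation on the intersection $X^{\sim}\cap Y^{\sim}$, and in particular $s_X(Z)=s_Y(Z)$ for every $Z$ in that intersection. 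As in the proof of Lemma 1, I would first normalize: compose $f$ with suitable symplectic permutations so that $f(X)=X$; then, since $Y$ is adjacent to $X$, $f(Y)=s_X(Y)$ is adjacent to $X$, and after a further reduction one can arrange $f(Y)=Y$ as well — so both $s_X$ and $s_Y$ fix $X$ and fix $Y$ setwise. (One must check this normalization does not cost generality, exactly as in Lemma 1.)

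The key computational step is then to extract, from equality of the two permutations on $X^{\sim}\cap Y^{\sim}$, the equality $s_X(j)=s_Y(j)$ for each individual $j\in\mathbb{Z}\setminus\{0\}$. For $j\neq \pm i$, the vertex $Z_j$ obtained from $X$ by swapping $j$ for $-j$ (whichever of $j,-j$ lies in $X$) also lies in $Y$ up to the same swap, and such $Z_j$ — or rather the common neighbour of $X$ and $Y$ encoding the coordinate $j$ — lies in $X^{\sim}\cap Y^{\sim}$; reading off the single element of $f(Z_j)\setminus X$ versus $f(Z_j)\setminus Y$ and using that both equal $s_X(j)=s_Y(j)$ handles all coordinates except possibly $i$. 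For the remaining coordinate $i$: since $s_X(X)=X$ and $s_X$ is a bijection fixing every other coordinate's membership, and likewise $s_Y(Y)=Y$, together with the already-established agreement of $s_X$ and $s_Y$ on all $j\neq i$ and the symplectic relation $s(-i)=-s(i)$, one pins down $s_X(i)=s_Y(i)$ by elimination — a symplectic permutation agreeing with another on the complement of $\{i,-i\}$ must agree on $\{i,-i\}$ too, since a symplectic permutation is determined by where it sends each pair $\{j,-j\}$ and with what sign, and the pair $\{i,-i\}$ is the only one left.

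The main obstacle I anticipate is bookkeeping with the normalization: after composing $f$ with a symplectic permutation to force $f(X)=X$, I need the resulting $s_X$ and $s_Y$ to be related to the original ones in a way that makes "$s_X=s_Y$" transfer back; and forcing both $f(X)=X$ and $f(Y)=Y$ simultaneously requires choosing the normalizing permutation to fix the whole edge, which is possible precisely because $f$ maps the edge $\{X,Y\}$ to an edge and the symplectic group acts transitively enough on edges. Once that setup is in place, the identification of $s_X$ and $s_Y$ coordinate by coordinate is routine, following the pattern already established in the proof of Lemma 1. I would therefore spend most of the write-up carefully justifying the double normalization and then dispatch the coordinate comparison quickly by citing the construction of $s_X$ from Lemma 1 verbatim.
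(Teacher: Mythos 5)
Your normalization step is sound and matches the paper's: compose $f$ with a symplectic permutation carrying the edge $\{f(X),f(Y)\}$ back to $\{X,Y\}$, so that one may assume $f(X)=X$ and $f(Y)=Y$. The gap is in the central mechanism. The hypercube graph is triangle-free: if $X=(X\cap Y)\cup\{i\}$ and $Y=(X\cap Y)\cup\{-i\}$ are adjacent, they have no common neighbours, because a vertex differing from $X$ in exactly one coordinate $j\ne i$ differs from $Y$ in the two coordinates $i$ and $j$. Hence $X^{\sim}\cap Y^{\sim}=\{X,Y\}$, and the ``common neighbour of $X$ and $Y$ encoding the coordinate $j$'' from which you propose to read off $s_{X}(j)=s_{Y}(j)$ does not exist. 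Agreement of $s_{X}$ and $s_{Y}$ on the two-element set $\{X,Y\}$ yields only $s_{X}(i)=s_{Y}(i)=i$ and the invariance of $X\cap Y$; it gives no information about $s_{X}(j)$ for $j\ne\pm i$. Likewise $f(Z_{j})\setminus Y$ is not a singleton (your $Z_{j}$ is at distance $2$ from $Y$), so that reading-off step is unavailable.

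The repair, which is what the paper actually does, is to use a pair of distinct vertices straddling the two neighbourhoods rather than a single common one: for $j\notin X\cup Y$ set $X':=\{j\}\cup(X\setminus\{-j\})\in X^{\sim}$ and $Y':=\{j\}\cup(Y\setminus\{-j\})\in Y^{\sim}$. These two vertices are adjacent to each other (they differ only in the pair $\{i,-i\}$), so $f(X')=s_{X}(X')=\{s_{X}(j)\}\cup(X\setminus\{-s_{X}(j)\})$ and $f(Y')=s_{Y}(Y')=\{s_{Y}(j)\}\cup(Y\setminus\{-s_{Y}(j)\})$ must be adjacent, and comparing these sets forces $s_{X}(j)=s_{Y}(j)$; the case $j\in X\cap Y$ then follows by applying the symplectic relation to $-j\notin X\cup Y$. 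Finally, your closing ``elimination'' argument for the pair $\{i,-i\}$ is not quite right as justified: two symplectic bijections agreeing off $\{i,-i\}$ must send $\{i,-i\}$ to the same pair $\{k,-k\}$, but could still differ by the sign swap $k\leftrightarrow -k$, so elimination alone does not finish; one needs $s_{X}(X)=X$ and $s_{X}(X\cap Y)=X\cap Y$ (as in the paper's direct computation) to conclude $s_{X}(i)=i=s_{Y}(i)$.
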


\begin{proof}
Since $X,Y$ are adjacent, we have
$$X=\{i\}\cup(X\cap Y)\;\mbox{ and }\;Y=\{-i\}\cup(X\cap Y)$$
for some $i\in X$.
We can assume that
$$f(X)=X\;\mbox{ and }\;f(Y)=Y.$$
Indeed, in the general case
$$f(X)=\{j\}\cup(f(X)\cap f(Y))\;\mbox{ and }\;f(Y)=\{-j\}\cup(f(X)\cap f(Y))$$
(since $f(X)$ and $f(Y)$ are adjacent);
we take any symplectic permutation $t$ sending $j$ and $f(X)\cap f(Y)$ to $i$ and $X\cap Y$ (respectively)
and consider $tf$.

Then
$$s_{X}(X\cap Y)=s_{X}(X)\cap s_{X}(Y)=f(X)\cap f(Y)=X\cap Y;$$
similarly,
$$s_{Y}(X\cap Y)=X\cap Y.$$
We have
$$(X\cap Y)\cup\{i\}=X=f(X)=s_{X}(X)=s_{X}((X\cap Y)\cup\{i\})=(X\cap Y)\cup\{s_{X}(i)\}$$
and the same arguments show that
$$(X\cap Y)\cup\{i\}=(X\cap Y)\cup\{s_{Y}(i)\}.$$
Therefore,
$$s_{X}(i)=s_{Y}(i)=i\;\mbox{ and }\;s_{X}(-i)=s_{Y}(-i)=-i.$$
Now, we show that the equality
\begin{equation}\label{eq3}
s_{X}(j)=s_{Y}(j)
\end{equation}
holds for every $j\ne \pm i$. Since $s_{X}$ and $s_{Y}$ are symplectic, it is sufficient to establish \eqref{eq3} only in the case when
$j\not\in X\cup Y$. Indeed, if $j\in X\cap Y$ then $-j$ does not belong to $X\cup Y$.

Let $j$ be an element of ${\mathbb Z}\setminus \{0\}$ which does not belong to $X\cup Y$.
Then $-j\in X\cap Y$ and
$$X':=\{j\}\cup (X\setminus \{-j\})\in X^{\sim},\;\;Y':=\{j\}\cup (Y\setminus \{-j\})\in Y^{\sim}$$
are adjacent. Hence
$$f(X')=s_{X}(X')=\{s_{X}(j)\}\cup (X\setminus \{-s_{X}(j)\})$$
and
$$f(Y')=s_{Y}(Y')=\{s_{Y}(j)\}\cup (Y\setminus \{-s_{Y}(j)\})$$
are adjacent. The latter is possible only in the case when $s_{X}(j)=s_{Y}(j)$.
\end{proof}

Using the connectedness of $H(A)$ and Lemma 2, we establish that $s_{X}=s_{Y}$ for all $X,Y\in H(A)$.

\section{Automorphisms of connected components}
Let $G_{1}$ and $G_2$ be permutation groups on sets $X_1$ and $X_2$, respectively.
Recall that the {\it wreath product} $G_1\wr G_2$ is a permutation group on $X_1\times X_2$ and its elements are
compositions of the following two types of permutations:
\begin{enumerate}
\item[(1)] for each element $g\in G_{2}$, the permutation $(x_1,x_{2})\to (x_1, g(x_{2}))$;
\item[(2)] for each function $i: X_{2}\to G_1$, the permutation $(x_1,x_{2})\to (i(x_{2})x_1,x_{2})$.
\end{enumerate}
Consider the subgroup of $G_1\wr G_2$ whose elements are compositions of all permutations of type (1) and permutations
of type (2) such that the set
$$\{\;x_{2}\in X_2\;:\;i(x_{2})\ne {\rm id}_{X_{1}\;}\}$$
is finite. This is a proper subgroup only in the case when $X_2$ is infinite;
it is will be called the {\it weak wreath product} and denoted by $G_1\wr_{w} G_2$.

\begin{cor}
The automorphism group of connected component of $H_{{\aleph}_{0}}$ is isomorphic
to the weak wreath product $S_{2}\wr_{w} S_{{\aleph}_{0}}$.
\end{cor}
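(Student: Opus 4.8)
The plan is to identify $\mathrm{Aut}(H(A))$ with the group of symplectic permutations preserving the component $H(A)$, and then to recognize that group inside $S_{2}\wr S_{\aleph_{0}}$ as the weak wreath product. For the first part, note that any automorphism $\varphi$ of the graph $H(A)$ extends to an automorphism of $H_{\aleph_{0}}$ by letting it act as the identity on all other connected components; this is legitimate since $H_{\aleph_{0}}$ has no edges between distinct components. By Theorem 1 this extension agrees on $H(A)$ with a regular automorphism, so $\varphi(X)=s(X)$ for all $X\in H(A)$ and some symplectic permutation $s$; moreover $s$ maps $H(A)$ onto $H(A)$, equivalently $|A\setminus s(A)|<\infty$, because $\varphi$ does. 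Conversely, a symplectic permutation $s$ with $|A\setminus s(A)|<\infty$ preserves the component $H(A)$ and restricts to an automorphism of it. Writing $\Sigma_{A}$ for the group of all symplectic permutations $s$ with $|A\setminus s(A)|<\infty$, the restriction map $s\mapsto s|_{H(A)}$ is then a surjective homomorphism $\Sigma_{A}\to\mathrm{Aut}(H(A))$.

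The next step is to show this homomorphism is injective, by the same argument as in the Example of Section~3, now carried out inside $H(A)$. If $s\in\Sigma_{A}$ restricts to the identity, then for $i\in A$ the adjacent vertices $A$ and $(A\setminus\{i\})\cup\{-i\}$ both lie in $H(A)$ and are fixed by $s$, so $s$ fixes their intersection $A\setminus\{i\}$ as a set; comparing this with $s(A)=A$ forces $s(i)=i$, whence $s(-i)=-i$ since $s$ is symplectic. For $i\notin A$ one has $-i\in A$, so the previous case applies to $-i$ and again $s(i)=i$. Hence $s=\mathrm{id}$ and $\mathrm{Aut}(H(A))\cong\Sigma_{A}$.

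It then remains to identify $\Sigma_{A}$ with $S_{2}\wr_{w} S_{\aleph_{0}}$. Since for each $i\in\mathbb{N}$ exactly one of $i,-i$ lies in $A$, I would fix the bijection $\mathbb{Z}\setminus\{0\}\to\{0,1\}\times\mathbb{N}$ sending the element of $\{i,-i\}$ that belongs to $A$ to $(0,i)$ and the other to $(1,i)$; under it the symplectic group becomes $S_{2}\wr S_{\aleph_{0}}$ acting on $\{0,1\}\times\mathbb{N}$ with $S_{\aleph_{0}}$ permuting the second coordinate, and $A$ becomes $\{0\}\times\mathbb{N}$. Writing an element $h$ of this wreath product as $h_{2}\circ h_{1}$, where $h_{1}$ is of type (1) associated with some $g\in S_{\aleph_{0}}$ and $h_{2}$ is of type (2) associated with some $\iota\colon\mathbb{N}\to S_{2}$, a short computation gives $h(x_{1},x_{2})=(\iota(g(x_{2}))x_{1},g(x_{2}))$, hence $h(A)=\{(\iota(m)(0),m):m\in\mathbb{N}\}$ and $A\setminus h(A)=\{(0,m):\iota(m)\neq\mathrm{id}\}$. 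Therefore $h\in\Sigma_{A}$ if and only if $\{m:\iota(m)\neq\mathrm{id}\}$ is finite, which is exactly the condition defining $S_{2}\wr_{w} S_{\aleph_{0}}$; combined with the previous step this gives $\mathrm{Aut}(H(A))\cong S_{2}\wr_{w} S_{\aleph_{0}}$.

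The step I expect to require the most care is the last one: choosing the coordinatization of $\mathbb{Z}\setminus\{0\}$ adapted to $A$ (so that $A\leftrightarrow\{0\}\times\mathbb{N}$), bringing a general element of the wreath product into a normal form in which the $S_{\aleph_{0}}$-factor can be separated off, and checking that the finiteness of $A\setminus h(A)$ is governed only by the support of the type-(2) function $\iota$ and not by the $S_{\aleph_{0}}$-factor — which is precisely what makes it coincide with membership in $S_{2}\wr_{w} S_{\aleph_{0}}$.
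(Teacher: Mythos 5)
Your proof is correct and follows essentially the same route as the paper: reduce to symplectic permutations via Theorem 1, characterize those preserving $H(A)$ by the finiteness of $s(A)\setminus A$, and identify that group with $S_{2}\wr_{w}S_{\aleph_{0}}$ via a coordinatization of $\mathbb{Z}\setminus\{0\}$ adapted to $A$. You merely supply two details the paper leaves implicit — the injectivity of the restriction map $\Sigma_{A}\to\mathrm{Aut}(H(A))$ and the explicit normal-form computation of $A\setminus h(A)$ — both of which are handled correctly.
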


\begin{proof}
Let $A\in H_{{\aleph}_{0}}$ and $f$ be an automorphism of the connected component $H(A)$.
By the previous section, $f$ is induced by a symplectic permutation $s$.
Since $f(A)=s(A)$ belongs to $H(A)$,
the set $s(A)\setminus A$ is finite.
So, the automorphism group of $H(A)$ is isomorphic to the group of symplectic permutations $s$
such that the set $s(A)\setminus A$ is finite.
The latter group is isomorphic to the weak wreath product $S_{2}\wr_{w} S_{{\aleph}_{0}}$
(indeed, we  can identify the set ${\mathbb Z}\setminus\{0\}$ with the Cartesian product ${\mathbb Z}_{2}\times A$
and the group $S_{{\aleph}_{0}}$ with the group of all permutation on $A$).
\end{proof}

\begin{rem}{\rm
The latter result is not new.
Since $H_{{\aleph}_{0}}$ is the Cartesian product of infinitely many
factors $K_2$, Corollary 1 can be drawn from the well-known results concerning the automorphism group of 
Cartesian product of graphs \cite{Im,Miller}.
}\end{rem}

\subsection*{Acknowledgement}
I express my deep gratitude to Wilfried Imrich for useful information.

\end{document}